\newtheorem{theorem}{Theorem}[section]
\newtheorem{defi}[theorem]{Definition}
\newtheorem{remark}[theorem]{Remark}
\newtheorem{lemma}[theorem]{Lemma}
\newtheorem{prop}[theorem]{Proposition}
\newtheorem{cor}[theorem]{Corollary}
\newcommand{\tp}{\mathrm{tp}}
\newcommand{\qftp}{\mathrm{qftp}}
\newcommand{\Age}{\mathrm{Age}}
\newcommand{\K}{\mathcal{K}}
\newcommand{\AP}{$\mathrm{AP}$\xspace}
\newcommand{\JEP}{$\mathrm{JEP}$\xspace}
\newcommand{\HP}{$\mathrm{HP}$\xspace}
\newcommand{\Ka}{\K^\ast}
\newcommand{\To}{T^0}
\newcommand{\Tp}{T^\ast_{>0}}
\newcommand{\Lp}{L^\ast_{>0}}
\newcommand{\eq}{^\mathrm{eq}}
\newcommand{\wt}{\widetilde}
\newcommand{\tree}{2^{<\omega}}
\begin{document}
\title{On many-sorted $\omega$-categorical theories}

\author{Enrique Casanovas, Rodrigo Pel\'aez, and Martin
  Ziegler\thanks{Work partially supported by MODNET, FP6 Marie Curie
    Training Network in Model Theory and its Applications with
    contract number MRTN-CT-2004-512234. The first author has been
    partially supported by grants MTM 2008-01545 and 2009SGR-187.}}
\date{March 17, 2011} \maketitle

\begin{abstract} We prove that every many-sorted $\omega$-categorical
theory is completely interpretable in a one-sorted
$\omega$-categorical theory. As an application, we give a short proof
of the existence of non $G$--compact $\omega$-categorical theories.
\end{abstract}

\section{Introduction}
A many--sorted structure can be easily transformed into a one--sorted
by adding new unary predicates for the different sorts. However
$\omega$-categoricity is not preserved.  In this article we present a
general method for producing $\omega$-categorical one--sorted
structures from $\omega$-categorical many--sorted structures.  This is
stated in Corollary~\ref{interpret}, the main theorem in this paper.
Our initial motivation was to understand Alexandre Ivanov's example
(in~\cite{Ivanov06}) of an $\omega$-categorical non $G$-compact
theory.  In Corollary~\ref{Ivanov} we apply our results to offer a
short proof of the existence of such theories.

Our method is based on the use of a particular theory $T_E$ of
equivalence relations $E_n$ on $n$-tuples. The quotient by $E_n$ is an
imaginary sort containing a predicate $P_n$ which can be used to copy
the $n$-th sort of the given many-sorted theory. Since the complexity
of $T_E$ is part of the complexity of the $\omega$-categorical
one--sorted theory obtained by our method, it is important to classify
$T_E$ from the point of view of stability, simplicity and related
properties. It turns out that $T_E$ is non-simple but it does not have
$\mathrm{SOP}_2$. A similar example of a theory with such properties
has been presented by Shelah and Usvyatsov
in~\cite{ShelaUsvyatsov03}. Their proof, as ours, relies on Claim 2.11 of~\cite{DzamonjaShelah04}, which is
known to have some gaps.  A revised version of~\cite{DzamonjaShelah04} will be posted in arxiv.org. In the meanwhile Kim and Kim have obtained a new proof of the same result: Proposition 2.3 from~\cite{KimKim11}.

The one--sorted theory $T_E$ is interdefinable with some many--sorted
theory $T^\ast$ which is presented and discussed in
Section~\ref{section:Tast}. In order to describe $T^\ast$ we need a
version of Fra\"{\i}ss\'e's amalgamation method that can be applied to
the many--sorted case (see Lemma~\ref{F1}). In
Section~\ref{section:stable_embedded} some results on stable
embeddedness from the third author (in~\cite{Ziegler06}) are extended
and used to prove Corollary~\ref{interpret}.
Section~\ref{section:classification} is devoted to classify $T_E$ from
the stability point of view.

A previous version of these results appeared in the second author's
Ph.D. Dissertation \cite{pelaez_diss}.  They have been corrected in
some points and in general they have been elaborated and made more
compact.

\section{$T^\ast$ and Fra\"{\i}ss\'e's amalgamation}
\label{section:Tast}

Let $L$ be a countable many--sorted language with sorts $S_i$, ($i\in
I$), and let $\K$ be a class of finitely generated
$L$--structures.\footnote{We allow empty sorts if $L$ has no constant
  symbols of that sort.}  We call an $L$--structure $M$ a
\emph{Fra\"{\i}ss\'e limit} of $\K$ if the following holds:
\begin{enumerate}
\item $\K=\Age(M)$, where $\Age(M)$ is the class of all finitely
  generated $L$--structures which are embeddable in $M$.
\item $M$ is at most countable.
\item $M$ is \emph{ultra--homogeneous}\, i.e., any isomorphism between
  finitely generated substructures extends to an automorphism of $M$.
\end{enumerate}
By a well--known argument $\K$ can only have one Fra\"{\i}ss\'e limit,
up to isomorphism.

\begin{lemma}\label{F1} Let $\K$ be as above.
  Then the following are equivalent:
  \begin{enumerate}[a)]
  \item The Fra\"{\i}ss\'e limit of $\K$ exists and is
    $\omega$-categorical.
  \item $\K$ has the amalgamation property \AP, the joint embedding
    property \JEP, the hereditary property \HP (i.e., finitely
    generated $L$--structures which are embeddable in elements of $\K$
    belong themselves to $\K$) and satisfies
    \begin{itemize}
    \item[$(\ast)$] for all $i_1\ldots i_n \in I$ there are only
      finitely many quantifier-free types of tuples $(a_1,\ldots,a_n)$
      where the $a_j$ are elements of sort $S_{i_j}$ in some structure
      $A\in\K$.
    \end{itemize}
  \end{enumerate}
  If the Fra\"{\i}ss\'e limit of $\K$ exists, it has quantifier
  elimination.
\end{lemma}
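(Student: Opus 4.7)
The plan is to adapt the classical Fra\"{\i}ss\'e construction to the many-sorted setting; the two new ingredients are the bookkeeping over the (at most countably many) sorts of $L$ and the use of the many-sorted Ryll--Nardzewski theorem. The key preliminary observation, needed for (b)$\Rightarrow$(a), is that $(\ast)$ together with the countability of $L$ forces $\K$ to contain only countably many isomorphism types of finitely generated structures, since any such structure is determined up to isomorphism by the quantifier-free type of a finite generating tuple, and those qf-types form a countable union of finite sets indexed by finite sort-sequences. Fixing an enumeration of these isomorphism types together with all countably many extension diagrams $A\hookrightarrow B$ in $\K$, I would build a chain $M_0\subseteq M_1\subseteq\cdots$ of finitely generated members of $\K$ by the standard bookkeeping: at each stage \AP realises a pending extension over an already embedded copy of $A$, and \JEP absorbs every isomorphism type of $\K$ somewhere along the chain. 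The union $M=\bigcup_n M_n$ is at most countable, a back-and-forth argument converts the resulting extension property into ultra-homogeneity, and $\K=\Age(M)$ follows: $\supseteq$ from the construction, $\subseteq$ because every finitely generated substructure of $M$ sits in some $M_n\in\K$ and then \HP is invoked. Finally, $\omega$-categoricity of $\Th(M)$ reduces via quantifier elimination (see below) and the many-sorted Ryll--Nardzewski theorem to the finiteness of qf-types in each fixed sort-sequence, which is exactly $(\ast)$.

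The converse (a)$\Rightarrow$(b) is routine once $\K=\Age(M)$ is granted. \HP: if $A$ is finitely generated and embeds in $B\in\K=\Age(M)$, then $A$ embeds in $M$ and so lies in $\Age(M)=\K$. For \AP, given $A\subseteq B_1$ and $A\subseteq B_2$ in $\K$, embed $B_1$ and separately $B_2$ into $M$; ultra-homogeneity of $M$ moves the two copies of $A$ onto each other, and any finitely generated $C\subseteq M$ containing both images is an amalgam lying in $\Age(M)=\K$. \JEP is the analogous statement with $A$ generated by the empty tuple, which is harmless in the many-sorted setting since we are free to choose any common finitely generated overstructure in $M$. Condition $(\ast)$ follows from Ryll--Nardzewski applied to the $\omega$-categorical theory $\Th(M)$: for each sort-sequence $i_1,\dots,i_n$ there are only finitely many complete types, hence a fortiori only finitely many qf-types.

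For the last clause, the quantifier elimination of the Fra\"{\i}ss\'e limit is the usual consequence of ultra-homogeneity: if $\qftp(\bar a)=\qftp(\bar b)$ in $M$, the induced map extends to an isomorphism between the finitely generated substructures generated by $\bar a$ and $\bar b$, which by ultra-homogeneity extends to an automorphism of $M$, so $\tp(\bar a)=\tp(\bar b)$. The only genuine difficulty I anticipate is the bookkeeping in (b)$\Rightarrow$(a): arranging the enumeration so that every extension task over every finitely generated substructure that ever appears along the chain is eventually discharged, while keeping each $M_n$ finitely generated and inside $\K$. This is the familiar Fra\"{\i}ss\'e dovetailing, now indexed over sort-sequences as well as over extensions, but it involves no new idea.
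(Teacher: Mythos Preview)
Your approach is the same as the paper's, and the argument is essentially correct, with one imprecision worth flagging. In the last paragraph you deduce from ultra-homogeneity that $\qftp(\bar a)=\qftp(\bar b)$ in $M$ implies $\tp(\bar a)=\tp(\bar b)$ in $M$, and then declare quantifier elimination. But ``qftp determines tp in the single model $M$'' is not yet QE for $\Th(M)$: one needs every formula to be equivalent modulo $\Th(M)$ to a quantifier-free one, and your argument as written gives no syntactic output. Since you then use QE to derive $\omega$-categoricity in (b)$\Rightarrow$(a), this is a genuine (if small) gap in the logical order of your proof.

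The paper closes exactly this gap by invoking $(\ast)$ at the QE step rather than afterwards: $(\ast)$ guarantees that for each sort-pattern there are only finitely many qf-types realized in $M$, so each such qf-type $p$ is isolated by a single quantifier-free formula $\chi_p$, and then any $\varphi(\bar x)$ is equivalent in $M$ --- hence in $\Th(M)$, since $\Th(M)$ is complete --- to the finite disjunction $\bigvee_{p\in P_\varphi}\chi_p$ where $P_\varphi=\{\qftp(\bar a):M\models\varphi(\bar a)\}$. This yields QE and $\omega$-categoricity in one stroke. Your route becomes correct as soon as you insert $(\ast)$ into the QE argument in this way; alternatively you could cite the general fact that the Fra\"{\i}ss\'e limit is the model-completion of the universal theory of $\K$ (as the paper remarks after the proof), but that is a separate, non-trivial argument.
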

\begin{proof}
  a) $\Rightarrow$ b). It is well known that the age of an
  ultra--homogeneous structure has \AP, \JEP and \HP. All
  quantifier--free types which occur in elements of $\K$ are
  quantifier--free types of tuples of the Fra\"{\i}ss\'e limit. So
  property $(\ast)$ follows from the Ryll--Nardzewski theorem.\\

  \noindent b) $\Rightarrow$ a). The quantifier-free type
  $\qftp(\bar{a})$ determines the isomorphism type of the structure
  generated by $\bar{a}$. Hence $(\ast)$ implies that $\K$ contains at
  most countably many isomorphism types. The existence of the
  Fra\"{\i}ss\'e limit $M$ follows now from \AP, \JEP and \HP.

  If two sequences $\bar{a}$ and $\bar{b}$ have the same
  quantifier-free type in $M$, there is an automorphism of $M$ which
  maps $\bar{a}$ to $\bar{b}$ and so it follows that $\bar{a}$ and
  $\bar{b}$ have the same type in $M$. Consider a formula
  $\varphi(\bar x)$ and the set
  $P_{\varphi(x)}=\{\qftp(\bar{a}): M\models \varphi(\bar{a})\}$. Then
  \[M\models\varphi(\bar{a}) \Leftrightarrow \qftp(\bar{a})\in P_\varphi
  \Leftrightarrow M\models \bigvee_{p\in P_\varphi} p(\bar{a}).\] Now,
  $(\ast)$ implies that $P_\varphi$ is finite and that in $M$ all
  $p=\qftp(\bar{a})$ are finitely axiomatisable, that is, $p=\langle
  \chi_p\rangle$ for some quantifier-free $\chi_p(x)$. Then $M\models
  \varphi(\bar{a}) \Leftrightarrow \bigvee_{p\in P_\varphi}\chi_p(\bar
  a)$. So $M$ has quantifier elimination and it is
  $\omega$--categorical since there are only finitely many
  possibilities for the $\chi_p$, depending only on the number and the
  sorts of the free variables of $\varphi$.
\end{proof}

It is easy to see that the theory of the Fra\"{\i}ss\'e limit is the
model--completion of the universal theory of $\K$.

\begin{defi}\rm
  Let $L^\ast$ be the language with countably many sorts
  $S,S_1,\ldots$, function symbols $f_i:S^i\rightarrow S_i$, and
  constants $c_i\in S_i$ and $\To$ the theory of all
  $L^\ast$--structures $A$ with
  \[f_i(\bar a)=c_i \Leftrightarrow \bar a \text{ has some repetition}\]
  for all $\bar a\in S^i(A)$.  Furthermore let $\Ka$ be the class of
  all finitely generated models of $\To$.
\end{defi}

\begin{lemma} $\Ka$ satisfies the conditions of Lemma~\ref{F1}.
\end{lemma}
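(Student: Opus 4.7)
The plan is to verify the four conditions of Lemma~\ref{F1}b) for $\Ka$: HP, JEP, AP, and $(\ast)$. Since the axiom defining $\To$ is a universal biconditional, $\To$ is a universal theory, so HP is automatic: every finitely generated substructure of a member of $\Ka$ is itself a finitely generated model of $\To$, hence lies in $\Ka$.

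For AP---from which JEP follows by taking $A$ to be the minimal substructure (empty $S$ and $S_i(A)=\{c_i\}$)---I would construct the free amalgam explicitly. Given embeddings $A \hookrightarrow B$ and $A \hookrightarrow C$ in $\Ka$, set $S(D) := S(B) \sqcup_{S(A)} S(C)$ and, for each $i$, let $S_i(D)$ be the pushout $S_i(B) \sqcup_{S_i(A)} S_i(C)$ together with one fresh element $d_{\bar e}$ for each non-repeating $i$-tuple $\bar e$ in $S(D)$ involving elements from both $S(B)\setminus S(A)$ and $S(C)\setminus S(A)$. Then define $f_i^D$ to send repetitive tuples to $c_i$, to agree with $f_i^B$ on tuples in $S(B)^i$ and with $f_i^C$ on tuples in $S(C)^i$ (these agree on the overlap $S(A)^i$ because $A$ is a common substructure), and to send each mixed non-repetitive $\bar e$ to $d_{\bar e}$. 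By construction $D \models \To$; $D$ is generated by the (finite) generators of $B$ and $C$; and the induced embeddings $B,C \hookrightarrow D$ commute over $A$.

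For $(\ast)$, fix sorts $T_{i_1},\ldots,T_{i_n}$ and a tuple $\bar a=(a_1,\ldots,a_n)$ of those sorts from some $M\in\Ka$. The only function symbols $f_i$ take inputs from $S$, so the terms of sort $S$ built from $\bar a$ are just the $S$-entries of $\bar a$, while the terms of sort $S_k$ are $c_k$, the $S_k$-entries of $\bar a$, and the values $f_k(\bar b)$ for $k$-tuples $\bar b$ of $S$-entries. Hence $\qftp(\bar a)$ is determined by the equality pattern on the $S$-entries of $\bar a$ together with, for each $k$, the equivalence relation induced on this finite set of $S_k$-terms (and the axiom forces $f_k(\bar b)\neq c_k$ for non-repeating $\bar b$). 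These are finitely many choices depending only on the sort-tuple, so there are only finitely many qf types.

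The only delicate point is the amalgamation step, where one must ensure that the fresh elements $d_{\bar e}$ really are new---pairwise distinct, different from $c_i$, and disjoint from $S_i(B)\cup S_i(C)$---so that the biconditional axiom of $\To$ is preserved in $D$. This is a matter of careful choice rather than a substantive obstacle; modulo the bookkeeping, the proof is essentially immediate.
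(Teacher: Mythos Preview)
Your proof is correct and follows essentially the same approach as the paper's, just with the details filled in. The paper's proof is a two-line sketch: it asserts that all models of $\To$ have \AP and \JEP (hence so does $\Ka$), and for $(\ast)$ simply notes that $f_i(a_{m_1},\ldots,a_{m_i})=c_i$ once $i$ exceeds the number of available $S$-elements---which is the same finiteness observation underlying your term-counting argument. Your explicit free-amalgam construction is exactly what makes the paper's assertion about \AP true, and your derivation of \JEP from \AP via the minimal substructure is a standard shortcut the paper omits.
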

\begin{proof}
  The class of all models of $\To$ has \AP and \JEP and therefore also
  $\Ka$.  $(\ast)$ follows easily from the fact that
  $f_i(a_{m_1}\ldots,a_{m_i})=c_i$ for all $i>k$ and
  $\{a_{m_1}\ldots,a_{m_i}\}\subset\{a_1,\ldots,a_k\}$,
\end{proof}

We define $M^\ast$ to be the Fra\"{\i}ss\'e limit of $\Ka$ and
$T^\ast$ to be the complete theory of $M^\ast$. $T^\ast$ is the
model--completion of $\To$.\\

Recall the following definition from \cite{Chat-Hru99}:
\begin{defi}\rm
  Let $T$ be a complete theory and $P$ a $0$--definable predicate. $P$
  is called \emph{stably embedded} if every definable relation on $P$
  is definable with parameters from $P$.
\end{defi}

\noindent \textbf{Remarks}
\begin{enumerate}
\item For many--sorted structures with sorts $(S_i)_{i\in I}$ this
  generalises to the notion of a sequence $(P_i)_{i\in I}$ of
  $0$--definable $P_i\subset S_i$ being stably embedded.
\item While the definition is meant in the monster model, an easy
  compactness argument shows that, if $P(M)$ is stably embedded in $M$
  for some weakly saturated\footnote{$M$ is weakly saturated if every
    type over the empty set is realized in $M$.} model $M$, then this
  is true for all models.
\item If $M$ is saturated then $P$ is stably embedded if and only if
  every automorphism (i.e.\ elementary permutation) of $P(M)$ extends
  to an automorphism of $M$. This was claimed in \cite{Chat-Hru99}
  only for the case that $|M|>|T|$. But the proof can easily be
  modified to work for the general case. One has to use the fact that
  if $A$ has smaller size than $M$, then any type over a subset of
  $\mathrm{dcl}\eq(A)$ can be realized in $M$.
\item If $M$ is $\omega$--categorical, it can be proved that for every
  finite tuple $a\in M$ there is a finite tuple $b\in P$ such that
  every relation on $P$ which is definable over $a$ can be defined
  using the parameter $b$.
\end{enumerate}

\begin{lemma} In $T^\ast$ the sequence of sorts $(S_1,S_2,\ldots)$ is
  stably embedded.
\end{lemma}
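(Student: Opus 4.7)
The plan is to establish stable embeddedness directly from the quantifier elimination afforded by Lemma~\ref{F1}, sidestepping the automorphism-extension characterisation offered in Remark~3. Let $P = \bigcup_{i \geq 1} S_i$. Given any relation $R \subseteq P^n$ that is $M^\ast$-definable with parameters from $M^\ast$, I will first invoke Lemma~\ref{F1} to replace its defining formula by a quantifier-free $L^\ast$-formula $\psi(\bar{x}, \bar{d})$ with parameters $\bar{d} \in M^\ast$.

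The core step is a syntactic analysis of the atomic subformulas of $\psi$, each of which is an equality $t_1 = t_2$ between $L^\ast$-terms. Since $L^\ast$ contains no function symbols with target sort $S$ and no constants of sort $S$, every $S$-sort subterm of $\psi$ is either a variable of sort $S$ or a parameter of sort $S$. Because the free variables $\bar{x}$ all lie in $P$-sorts, the former is impossible, so every $S$-sort subterm must in fact be an $S$-sort parameter drawn from $\bar{d}$. Hence any subterm of the form $f_i(s_1, \ldots, s_i)$ appearing in $\psi$ has parameters as its arguments and evaluates, in $M^\ast$, to a definite element of $S_i(M^\ast) \subset P(M^\ast)$.

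Once each such $f_i$-subterm is replaced by its value, $\psi$ will collapse to a Boolean combination of equalities between the variables $\bar{x}$, the constants $c_i$, and parameters lying entirely in $P(M^\ast)$. This exhibits $R$ as $P(M^\ast)$-definable, which is precisely stable embeddedness of the sequence $(S_i)_{i \geq 1}$. The principal obstacle is the careful syntactic bookkeeping to ensure that every $S$-sort piece of $\psi$ can indeed be absorbed into a $P$-sort parameter via this evaluation; once this is seen, the conclusion follows without any appeal to $\omega$-saturation, back-and-forth, or automorphism extension.
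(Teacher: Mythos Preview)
Your argument is correct and is essentially the paper's own approach spelled out in detail: both rest on quantifier elimination together with the observation that, since $L^\ast$ has no terms of sort $S$ other than variables, every $S$-sorted subterm in a formula whose free variables live in the $S_i$'s must be a parameter, so the relevant $f_i$-values already lie in $\bigcup_i S_i$. The paper compresses exactly this syntactic reduction into the one-line type-theoretic statement $\tp(\bar a/S_1,S_2,\ldots)=\tp(\bar a/f_1(\bar a),f_2(\bar a),\ldots)$.
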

\begin{proof} Clear since $\tp(\bar{a}/S_1,\ldots)=
  \tp(\bar{a}/f_1(\bar{a}),\ldots)$.  See also the discussion in
  \cite{Chat-Hru99}.
\end{proof}

For a complete theory $T$ and a $0$--definable predicate $P$ the
\emph{induced structure} on $P$ consists of all $0$--definable
relations on $P$. Note that the automorphisms of $P$ with its induced
structure are exactly the elementary permutations of $P$ in the sense
of $T$.

\begin{lemma} In $T^\ast$ the induced structure on
  $(S_1,S_2,\ldots)$ equals its $\Lp$--structure, where $\Lp$ is the
  sublanguage of $L^\ast$ which has only the sorts\/ $S_1,S_2,\ldots$
  and the constants $c_1,c_2,\ldots$.
\end{lemma}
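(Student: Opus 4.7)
The plan is to invoke the quantifier elimination for $T^*$ guaranteed by Lemma~\ref{F1}, and then to observe that a quantifier--free $L^*$--formula whose free variables all live in the sorts $S_1, S_2, \ldots$ is automatically an $L^*_{>0}$--formula.

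In more detail, let $R$ be a $0$--definable relation on $(S_1, S_2, \ldots)$, defined by an $L^*$--formula $\varphi(\bar x)$ with $\bar x$ of sorts $S_{i_1}, \ldots, S_{i_n}$. By quantifier elimination we may replace $\varphi$ by an equivalent quantifier--free $L^*$--formula in the same free variables. The crucial structural observation is that the only function symbols of $L^*$ are the $f_j : S^j \to S_j$, all of which require arguments of the ``big'' sort $S$, whereas every variable in $\bar x$ and every constant $c_k$ lives in some $S_m$. No variable or constant of sort $S$ is therefore available to feed into an $f_j$. Consequently every $L^*$--term built from $\bar x$ together with the constants is simply a variable from $\bar x$ or some $c_k$, and each atomic subformula of $\varphi$ is an equality between two such terms. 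This is precisely an atomic formula of $L^*_{>0}$, so $\varphi$ itself is already an $L^*_{>0}$--formula.

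The converse inclusion is immediate, since each $c_k$ is $0$--definable in $T^*$, so every $L^*_{>0}$--definable relation is $0$--definable. Together this shows that the induced structure on $(S_1, S_2, \ldots)$ coincides with the $L^*_{>0}$--structure. There is no genuine obstacle in this argument; all of the content is a short inspection of the term structure of $L^*$, made effective by the quantifier elimination already established in Lemma~\ref{F1}.
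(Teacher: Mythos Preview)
Your argument is correct and is precisely the approach the paper takes: the paper's entire proof is the phrase ``Quantifier elimination'', and you have simply spelled out the routine term-inspection that makes this work.
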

\begin{proof} Quantifier elimination.
\end{proof}

Let $\Tp$ denote the theory of all $\Lp$--structures, where all sorts
$S_i$ are infinite. Clearly $\Tp$ is the restriction of $T^\ast$ to
$\Lp$.

\begin{lemma}
  Every model of $\Tp$ can be expanded to a model of $T^\ast$.
\end{lemma}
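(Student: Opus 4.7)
The approach is to construct the expansion directly by a Fra\"{\i}ss\'e-style generic construction in which the $\Lp$-reduct is held equal to~$N$ throughout. Recall that $T^\ast$, being the theory of the Fra\"{\i}ss\'e limit of~$\Ka$, is axiomatized by~$\To$ together with the extension axioms ``for each pair $A\subseteq B$ in $\Ka$, every embedding $A\to M$ extends to some embedding $B\to M$'' (these already imply that all sorts are infinite and that each $b\in S_i\setminus\{c_i\}$ is of the form $f_i(\bar a)$ for some no-rep tuple~$\bar a$). It therefore suffices, given $N\models\Tp$, to define a sufficiently large set~$S$ together with functions $f_i:S^i\to S_i(N)$ satisfying~$\To$ and all the extension axioms.

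Take $|S|:=\kappa$ with $\kappa:=\sup_i|S_i(N)|+\aleph_0$, and define the $f_i$ by transfinite recursion of length~$\kappa$, building an increasing chain $M_0\subseteq M_1\subseteq\cdots$ of partial structures whose $\Lp$-reducts are all equal to~$N$. A suitable book-keeping interleaves two types of tasks: (a)~extension requests $(X,B)$ with $X$ a finite substructure of the current $M_\alpha$ and $X\subseteq B$ in $\Ka$; (b)~surjectivity requests that a given $b\in S_i(N)\setminus\{c_i\}$ be realized as some $f_i(\bar a)$. Task~(a) is handled by applying \AP to amalgamate $B$ with a suitably chosen finite $Y\subseteq M_\alpha$ containing~$X$, using fresh elements from the $S$-reserve for the new $S$-elements of~$B$ and interpreting any ``free'' $S_i$-element of~$B$ as an element of $S_i(N)$ which is still free relative to~$Y$. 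Task~(b) is handled by appending a fresh no-repetition tuple~$\bar a$ and declaring $f_i(\bar a):=b$.

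The main obstacle is realizing extension requests within the prescribed $\Lp$-sorts: whenever $B$ introduces a ``free'' $S_i$-element over~$X$, we must locate an element of $S_i(N)$ which is not already committed as an $f_i$-value of any tuple meeting~$Y$. Because each $S_i(N)$ is infinite while~$Y$ is finite at every stage, such fresh elements are always available, and \AP guarantees that our incremental choices amalgamate consistently with prior commitments. Once all $\kappa$~requests have been processed, the resulting $L^\ast$-structure~$M$ satisfies $\To$, every sort is infinite, each $b\in S_i(N)\setminus\{c_i\}$ lies in $\mathrm{im}(f_i|_{\text{no rep}})$, and every Fra\"{\i}ss\'e extension axiom for~$\Ka$ holds; hence $M\models T^\ast$ is the desired expansion of~$N$.
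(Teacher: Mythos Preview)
Your approach is correct and essentially the same as the paper's: both construct the expansion by iterating a one-step amalgamation whose sole nontrivial ingredient is that each $S_i(N)$ is infinite, so fresh targets in $S_i(N)$ are always available for the new $S_i$-elements of a finite extension $B$. The paper packages this more compactly by first expanding $N$ arbitrarily to a total model of $\To$ and then isolating the one-step amalgamation (embed $B\supseteq A$ over $A$ into some $N'\supseteq N$ with $S_i(N')=S_i(N)$) as a separate claim before iterating, whereas you inline the same step into an explicit bookkept construction; your separate ``surjectivity requests'' are already covered by the extension requests (take $A=\langle b\rangle$ and $B$ adding a no-rep preimage), but this redundancy is harmless.
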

\begin{proof}
  It is easy to see that the following amalgamation property is true:
  \begin{itemize}
  \item[] Let $N$ be a model of $\To$ with infinite sorts
    $S_i(N)$. Let $A$ be a finitely generated substructure of $N$ and
    $B\in\Ka$ an extension of $A$. Then $B$ can be embedded over $A$
    in an extension $N'$ of $N$ which is a model of $\To$ and such
    that $S_i(N')=S_i(N)$ for all $i$.
  \end{itemize}
  If a model of $\Tp$ is given, we expand it arbitrarily to a model
  $N$ of $\To$ and apply the above amalgamation property repeatedly
  such that the union of the resulting chain is a model of $T^\ast$
  which has the same sorts $S_i$ as $N$.
\end{proof}

\begin{cor} There is an $\omega$--categorical one-sorted theory $T_E$
  with a series of $0$--definable infinite predicates $P_1,P_2,\ldots$
  in $T_E\eq$ such that
  \begin{enumerate}
  \item $(P_1,P_2,\ldots)$ is stably embedded
  \item The many--sorted structure induced on $(P_1,P_2,\ldots)$ is
    trivial.
  \item For every sequence $\kappa_1,\kappa_2,\ldots$ of infinite
    cardinals there is a model $N$ of $T_E$ such that
    $|P_i(N)|=\kappa_i$.
  \end{enumerate}
\end{cor}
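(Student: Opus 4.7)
The plan is to take $T_E=\Th(S(M^\ast);E_1,E_2,\ldots)$ in the one-sorted language $L_E$ with $2i$-ary relations $E_i$, where $E_i(\bar x,\bar y)\Leftrightarrow f_i(\bar x)=f_i(\bar y)$ in $M^\ast$. Inside $T_E\eq$ I let $P_i=S^i/E_i$, which via $[\bar a]_{E_i}\mapsto f_i(\bar a)$ is in $0$-definable bijection with the sort $S_i$ of $T^\ast$; note that the constant $c_i$ corresponds to the unique $E_i$-class of tuples with a repetition and so is $0$-definable in $T_E\eq$. First I would check that $T_E$ is $\omega$-categorical: any quantifier-free $L^\ast$-atom involving $\bar a\in S(M^\ast)^k$ reduces either to an equality among the $a_j$, to an $E_i$-relation between sub-tuples, or to a sub-tuple having a repetition, so the $L_E$-type of $\bar a$ is equivalent to its quantifier-free $L^\ast$-type; by quantifier elimination for $T^\ast$ and Ryll--Nardzewski this set of types is finite for each $k$.

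Condition (3) is then immediate from the previous lemma that every model of $\Tp$ extends to a model of $T^\ast$. Given infinite cardinals $\kappa_i$, I build a $\Tp$-model with $|S_i|=\kappa_i$, extend it to a model $N$ of $T^\ast$ (the amalgamation chain there adds only to the sort $S$, so $|S_i(N)|=\kappa_i$ is preserved), and take its $L_E$-reduct; then $|P_i(N)|=|S_i(N)|=\kappa_i$.

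Conditions (1) and (2) follow from the two lemmas about $T^\ast$ --- that $(S_1,S_2,\ldots)$ is stably embedded and that its induced structure equals the $\Lp$-structure --- via the identification $P_i\leftrightarrow S_i$. On one hand, any $L_E\eq$-formula on $(P_1,\ldots)$ is, via this identification, an $L^\ast$-formula on $(S_1,\ldots)$; on the other, every $\Lp$-formula --- built only from equality and the constants $c_i$ --- is expressible in $L_E\eq$ because each $c_i$ is $0$-definable there. Matching these gives (2): the $T_E\eq$-induced structure on $(P_1,\ldots)$ equals the $\Lp$-structure, which is the trivial structure of the corollary. For (1), any $T_E\eq$-definable relation on $(P_1,\ldots)$ with parameters anywhere in $S$ is a $T^\ast$-definable relation on $(S_1,\ldots)$, hence by stable embeddedness definable with parameters in $(S_1,\ldots)$; the induced-structure lemma then provides an $\Lp$-defining formula, which is again an $L_E\eq$-formula with parameters from $(P_1,\ldots)$. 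The main obstacle is precisely this transfer of stable embeddedness from $T^\ast$ to $T_E\eq$ across the sort change, and it works only because the induced structure on $(S_1,\ldots)$ in $T^\ast$ is simple enough (equality and constants) to pull back into $L_E\eq$ without invoking symbols the latter cannot express.
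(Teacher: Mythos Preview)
Your construction and line of argument are exactly those of the paper: define $E_i$ on $S$ via $f_i$, read off the $P_i$ as imaginary sorts of $T_E$, and transport the three lemmas about $T^\ast$ (stable embeddedness, induced structure $=\Lp$, and expandability of $\Tp$-models) through the identification $S^i/E_i\leftrightarrow\mathrm{im}\,f_i\subseteq S_i$. Your verifications of $\omega$-categoricity and of (1)--(3) are more explicit than the paper's, which simply states the construction and invokes the preceding lemmas.

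There is one genuine slip in the definition of $P_i$. You set $P_i=S^i/E_i$ and then assert that the induced structure ``equals the $\Lp$-structure, which is the trivial structure of the corollary.'' But $\Lp$ carries the constants $c_1,c_2,\ldots$, and for $i\geq 2$ the $E_i$-class of tuples with a repetition is a $0$-definable point of $S^i/E_i$ (you even note that $c_i$ is $0$-definable in $T_E\eq$). So with your $P_i$ the induced structure is \emph{not} trivial: each $P_i$ (for $i\ge 2$) has a distinguished element. The paper avoids this by setting $P_i=S_i\setminus\{c_i\}$, i.e.\ the $E_i$-classes of repetition-free $i$-tuples; then the only $0$-definable relations on $(P_1,P_2,\ldots)$ are boolean combinations of equalities, which is what ``trivial'' means here and what is needed later to regard an arbitrary many-sorted $\omega$-categorical $T$ as an expansion of $T_E\restriction(P_1,P_2,\ldots)$. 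With that single adjustment your proof goes through unchanged; note also that $|P_i|=|S_i|$ since the sorts are infinite, so your argument for (3) is unaffected.
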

\begin{proof}
  The language $L_E$ of $T_E$ will contain for each $i$ a symbol $E_i$
  for an equivalence relation between $i$--tuples.  Let
  $M=(S,S_1,S_2,\ldots)$ be a model of $T^\ast$. For $a,b\in S^i$
  define $E_i(a,b)\Leftrightarrow f_i(a)=f_i(b)$. $T_E$ is the theory
  of $M_E=(S,E_1,E_2,\ldots)$. The $S_i$ live in $M_E\eq$ as
  $M_E^i/E_i$ and the $c_i$ are $0$--definable in $M_E\eq$. We set
  $P_i=S_i\setminus\{c_i\}$.
\end{proof}

It is easy to see that $T_E$ as constructed in the proof is the
model--completion of the theory of all structures $(M,E_1,E_2,\ldots)$
where $E_n$ is an equivalence relation on $M^n$ where one equivalence
class consists of all $n$--tuples which contain a repetition. That
$T_E$ has quantifier elimination can be proved as follows: Every
formula $\varphi(\bar x)$ of $L_E$ is equivalent to a quantifier--free
$L^\ast$--formula $\varphi'(\bar x)$. $\varphi'(\bar x)$ is a boolean
combination of formulas of the form $f_i(\bar x')\doteq f_i(\bar x'')$
and $f_i(\bar x')\doteq c_i$, which are equivalent to quantifier--free
$L_E$--formulas: $f_i(\bar x')\doteq f_i(\bar x'')$ is equivalent to
$E_i(\bar x',\bar x'')$, $f_i(x'_1,\ldots,x'_i)\doteq c_i$ is
equivalent to $\bigvee_{1\leq k<l\leq i}x'_k\doteq x'_l$.

\section{Expansions of stably embedded predicates}
\label{section:stable_embedded}

Let $T$ be complete theory with two sorts $S_0$ and $S_1$. We consider
$S_1$ as a structure of its own carrying the structure induced from
$T$ and denote by $T\restriction S_1$ the theory of $S_1$.

\begin{lemma}\label{martinlemma}
  Let $T$ be complete theory with two sorts $S_0$ and $S_1$.  Let
  $\wt T_1$ be a complete expansion of $T\restriction
  S_1$. Assume that $S_1$ is stably embedded. Then we have
  \begin{enumerate}
  \item\label{complete} $\wt T=T\cup\wt T_1$ is
    complete.\footnote{Actually we have:
      $S_1$ is stably embedded if and only if $\wt T$ is complete for
      all complete expansions $\wt T_1$.}
    (\cite[Lemma
    3.1]{Ziegler06})
  \item\label{stably} $S_1$ is stably embedded in $\wt T$ and $\wt
    T\restriction S_1=\wt T_1$.
  \item\label{omega} If\/ $T$ and $\wt T_1$ are $\omega$--categorical,
    then $\wt T$ is also $\omega$--categorical.
  \end{enumerate}
\end{lemma}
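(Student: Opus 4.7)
The plan is to prove the three parts in order, each using the same ``lifting trick''. Since $\wt T_1 \supseteq T\restriction S_1$, every $\wt T_1$-automorphism of $S_1$ is in particular a $T\restriction S_1$-automorphism, so by stable embeddedness of $S_1$ in $T$ (and the automorphism criterion of Remark~3) it extends, in any saturated $\wt T$-model $\wt M$, to a $T$-automorphism of $\wt M$; since this extension restricts on $S_1$ to the original $\wt T_1$-automorphism, it is automatically $\wt T$-elementary.

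For (1), take saturated $\wt M, \wt N \models \wt T$ of equal cardinality. Their $T$-reducts yield a $T$-isomorphism $\sigma \colon \wt M|_T \to \wt N|_T$, and their $\wt T_1$-reducts on $S_1$ yield a $\wt T_1$-isomorphism $\tau \colon S_1(\wt M) \to S_1(\wt N)$. The map $\tau^{-1}\circ\sigma|_{S_1}$ is a $T\restriction S_1$-automorphism of $S_1(\wt M)$, so by the lifting trick it extends to a $T$-automorphism $\hat\rho$ of $\wt M$. The composition $\sigma\circ\hat\rho^{-1}\colon \wt M \to \wt N$ is a $T$-isomorphism whose restriction to $S_1$ equals $\tau$, hence is $\wt T$-elementary, yielding $\wt M \cong \wt N$ as $\wt T$-structures.

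For (2), the lifting trick says directly that every $\wt T_1$-automorphism of $S_1(\wt M)$, for saturated $\wt M \models \wt T$, extends to a $\wt T$-automorphism of $\wt M$. By Remark~3, stable embeddedness of $S_1$ in $\wt T$ follows, since every $\wt T$-elementary permutation of $S_1$ is in particular a $\wt T_1$-automorphism. For $\wt T\restriction S_1 = \wt T_1$, the inclusion $\wt T_1 \subseteq \wt T\restriction S_1$ is trivial, while any $0$-definable relation on $S_1$ in $\wt T$ is invariant under all $\wt T_1$-automorphisms of $S_1(\wt M)$ (because these lift to $\wt T$-automorphisms), hence is $\wt T_1$-definable over $\emptyset$ by $\wt T_1$-homogeneity.

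For (3), I apply Ryll--Nardzewski. A $\wt T$-type over $\emptyset$ in a saturated $\wt M$ is represented by a pair $(\bar b, \bar c)$ with $\bar b \in S_0$, $\bar c \in S_1$; by Remark~4 applied to the $\omega$-categorical $T$, there is an integer $n$ depending only on the sort of $\bar b$ and a tuple $\bar d \in S_1(\wt M)$ of length at most $n$ such that $\tp_T(\bar b / \bar d)$ already determines $\tp_T(\bar b / S_1(\wt M))$. The key claim is that the pair
\[
\bigl(\tp_T(\bar b, \bar c, \bar d),\ \tp_{\wt T_1}(\bar c, \bar d)\bigr)
\]
determines $\tp_{\wt T}(\bar b, \bar c)$; since this pair takes only finitely many values by $\omega$-categoricity of $T$ and $\wt T_1$, $\omega$-categoricity of $\wt T$ follows. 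To prove the claim, for two data-equivalent tuples $(\bar b_i, \bar c_i, \bar d_i)$ in a saturated $\wt M$, the lifting trick produces a $\wt T$-automorphism sending $(\bar c_1, \bar d_1) \mapsto (\bar c_2, \bar d_2)$; after applying it, we may assume $\bar c_1 = \bar c_2 = \bar c$ and $\bar d_1 = \bar d_2 = \bar d$. Then $\bar b_1, \bar b_2$ share the common code $\bar d$ with the same $T$-type over it, hence the same $T$-type over all of $S_1(\wt M)$; a $T$-automorphism of $\wt M$ fixing $S_1$ pointwise and sending $\bar b_1 \mapsto \bar b_2$ exists by $T$-saturation, and is trivially $\wt T$-elementary. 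The main obstacle, pervasive in each part, is reconciling a $T$-automorphism with a $\wt T_1$-automorphism of $S_1$ when they clash: stable embeddedness of $S_1$ in $T$ is invoked precisely to correct the mismatch by lifting an appropriate $S_1$-automorphism to the whole structure.
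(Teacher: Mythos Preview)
Your arguments for parts (1) and (2) are essentially the paper's: take saturated models of the same cardinality, produce a $T$-isomorphism and a $\wt T_1$-isomorphism on $S_1$, and use stable embeddedness to correct the mismatch on $S_1$ by lifting the discrepancy automorphism. The only cosmetic difference is that you lift on the domain side while the paper lifts on the target side. Your explicit justification of $\wt T\restriction S_1=\wt T_1$ via automorphism invariance is a welcome addition (the paper leaves this implicit); just note that ``$\wt T_1$-homogeneity'' alone does not literally give definability of an invariant \emph{definable} set---one needs the standard observation that the restriction map between Stone spaces is then a bijection of compact Hausdorff spaces, hence a homeomorphism, so clopen sets correspond.

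Part (3) is where you genuinely diverge. The paper simply reruns the proof of (1) with two \emph{countable} models: since $T$ and $\wt T_1$ are $\omega$-categorical, the $T$-reducts and the $\wt T_1$-reducts of $S_1$ are each unique up to isomorphism, and the countable $T$-model is already saturated, so the lifting trick applies verbatim and yields $\wt M\cong\wt M'$. This is a two-line argument. Your route via Ryll--Nardzewski and coding tuples by finite $\bar d\in S_1$ (Remark~4) is correct but considerably longer, and the step ``a $T$-automorphism of $\wt M$ fixing $S_1$ pointwise and sending $\bar b_1\mapsto\bar b_2$ exists by $T$-saturation'' deserves a word of care: saturation over the large set $S_1(\wt M)$ is not automatic, and one must invoke stable embeddedness again (types over $S_1$ are determined by types over a finite tuple, so the back-and-forth goes through). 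What your approach buys is an explicit description of $\wt T$-types in terms of $T$-types and $\wt T_1$-types, which is informative in its own right; what the paper's approach buys is brevity and the avoidance of this extra bookkeeping.
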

\begin{proof}
  \ref{complete}.  Let $\wt M=(M_0,\wt M_1)$ and $\wt M'=(M'_0,\wt
  M'_1)$ be saturated models of $\wt T$ of the same cardinality and
  $M=(M_0,M_1)$ and $M'=(M'_0,M'_1)$ their restrictions to the
  language of $T$.  Since $T$ and $\wt T_1$ are complete, there are
  isomorphisms $f:M\to M'$ and $g:\wt M_1\to \wt M'_1$. $gf^{-1}$ is
  an automorphism of $M'_1$. Since $M'_1$ is stably embedded in $M'$,
  $gf^{-1}$ extends to an automorphism $h$ of $M'$. $hf$ is now an
  isomorphism from $M$ to $M'$ which extends $g$.\\

  \noindent \ref{stably}. We use the same notation as in the proof of
  \ref{complete}. Let $\wt M$ be a saturated model of $\wt T$. We have
  to show that every automorphism $f$ of $\wt M_1$ extends to an
  automorphism of $\wt M$. But $f$ extends to an automorphism of $M$,
  which is automatically an automorphism of $\wt M$.\\

  \noindent\ref{omega}. Start with two countable models $\wt M$ and
  $\wt M'$ and proceed as in the proof of \ref{complete}.
\end{proof}

\begin{cor}\label{interpret}
  Every many-sorted $\omega$-categorical theory is completely (the
  induced structure is exactly this) interpretable in a one-sorted
  $\omega$-categorical theory.
\end{cor}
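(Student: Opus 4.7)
The plan is to apply Lemma~\ref{martinlemma} to the stably embedded family $(P_1,P_2,\ldots)$ furnished by the previous corollary, expanding its (trivial) induced structure to a copy of the given many-sorted $\omega$-categorical theory $T'$.

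More precisely, let $T'$ have sorts $(S'_j)_{j\in J}$ with $J$ at most countable. We fix an injection $j\mapsto i(j)$ of $J$ into the positive integers, and take $T_E$ with imaginary predicates $P_1,P_2,\ldots$ from the previous corollary. We then form an expansion $\wt T_1$ of $T_E\restriction(P_1,P_2,\ldots)$ by adjoining the non-logical symbols of $T'$, identifying $S'_j$ with $P_{i(j)}$, declaring $T'$ on the identified subfamily, and leaving every remaining $P_i$ as an interaction-free pure infinite set. Clause~(3) of the previous corollary, applied to a countable model of $T'$, gives consistency of $\wt T_1$; completeness and $\omega$-categoricity then follow because $T'$ itself is such and the extra pure-set sorts contribute only finitely many $n$-types each. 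Applying the (obvious) many-sorted version of Lemma~\ref{martinlemma} with $T=T_E$, $S_0$ the real sort, and $S_1$ the family $(P_i)$, we obtain $\wt T:=T_E\cup\wt T_1$, complete and $\omega$-categorical, with $(P_i)$ still stably embedded and $\wt T\restriction(P_i)=\wt T_1$.

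It remains to repackage $\wt T$ as a one-sorted theory. The new symbols of $\wt T_1$ all live on the imaginary sorts $P_i$, which are quotients of tuples of the base sort $S$ by $E_i$; each such symbol pulls back canonically to a relation or function on tuples of $S$. The resulting one-sorted theory $T^\circ$, a definitional expansion of $L_E$ in the $T_E\eq$ sense, is interdefinable with $\wt T$ and therefore still $\omega$-categorical. The sorts $P_{i(j)}$ then interpret $T'$ inside the imaginaries of $T^\circ$, and Lemma~\ref{martinlemma}(\ref{stably}) ensures that the induced structure on them is exactly $T'$, delivering the complete interpretation.

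The main item to verify is the many-sorted extension of Lemma~\ref{martinlemma}, but its proof never uses the single-sort hypothesis on $S_1$: it relies only on stable embeddedness, which is already defined for families in the first remark after the definition of stable embeddedness. Everything else is routine bookkeeping, made easy by the fact that $(P_i)$ carries no induced structure in $T_E$, so the only nontrivial content of $\wt T_1$ is $T'$ itself.
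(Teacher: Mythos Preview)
Your argument is correct and follows exactly the paper's approach: view the given theory as an expansion of the trivial induced structure on $(P_1,P_2,\ldots)$ and apply Lemma~\ref{martinlemma} to $T_E$. You are simply more explicit than the paper about the indexing injection, the passage to the many-sorted form of Lemma~\ref{martinlemma}, and the pullback of the new symbols from the imaginary sorts to the home sort---points the paper compresses into the single phrase ``$\wt T$ is a one-sorted complete theory.''
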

\begin{proof}
  Let $T$ be a complete theory with countably many sorts
  $P_1,P_2,\ldots$. We consider $T$ as an expansion of
  $T_E\restriction (P_1,P_2,\ldots)$ and set $\wt T=T_E\cup T$.  $\wt
  T$ is a one-sorted complete theory. We have $\wt T\restriction
  (P_1,P_2,\ldots)= T$. If $T$ is $\omega$--categorical, $\wt T$ is
  also $\omega$--categorical.
\end{proof}

\begin{cor}[Ivanov]\label{Ivanov}
  There is a one--sorted $\omega$--categorical theory which is not
  G--compact.
\end{cor}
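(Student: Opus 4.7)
The plan is a two-step reduction. Step~1: produce a many-sorted $\omega$-categorical theory $T_0$ that is not G-compact. Step~2: apply Corollary~\ref{interpret} to interpret $T_0$ completely inside a one-sorted $\omega$-categorical theory $\wt T$, and transfer the failure of G-compactness from $T_0$ to $\wt T$. The point of going through the corollary is that many-sorted examples are much easier to write down directly than one-sorted ones.

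The transfer in Step~2 is almost automatic given what is already in the paper. Since $\wt T\restriction(P_1,P_2,\ldots)=T_0$ and $(P_1,P_2,\ldots)$ is stably embedded in $\wt T$ by Lemma~\ref{martinlemma}\ref{stably}, every elementary permutation of the interpreted sorts (in the sense of $T_0$) lifts to an automorphism of $\wt T$, and every automorphism of $\wt T$ restricts to such a permutation. Consequently, for tuples $\bar a,\bar b$ from these sorts, their Lascar strong type and their Kim--Pillay strong type agree whether computed in $T_0$ or in $\wt T$, as do the classes of every bounded $\emptyset$-type-definable equivalence relation. A pair witnessing that the Lascar strong type is strictly finer than the Kim--Pillay strong type in $T_0$ therefore still witnesses this in $\wt T$, so $\wt T$ is not G-compact.

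For Step~1 I would run the Fra\"{\i}ss\'e construction of Lemma~\ref{F1} on a class $\K$ of finite many-sorted structures engineered so that in the limit, on one distinguished sort $P_0$, there is a bounded invariant equivalence relation $F$ with at least two classes that is not type-definable, while every $\emptyset$-definable equivalence relation on $P_0$ with finitely many classes is coarser than $F$. Auxiliary sorts $P_n$ can be used to carry a coherent descending chain of such definable equivalence relations that approximates $F$, and the amalgamation is designed so that in the limit any two elements in the same intersection-class are connected by an indiscernible sequence while elements in distinct $F$-classes are at infinite Lascar distance yet share a common Kim--Pillay class. This is in essence a many-sorted version of Ivanov's original construction.

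The main obstacle is the construction in Step~1: one must verify \AP, \HP, \JEP and the finite-type condition $(\ast)$ for the chosen $\K$, while simultaneously ensuring that the amalgamation is rigid enough to prevent $F$ from being type-definable yet permissive enough to collapse distinct $F$-classes into a single Kim--Pillay class in the limit. Once such a $T_0$ is in place, $\omega$-categoricity of its Fra\"{\i}ss\'e limit is delivered by Lemma~\ref{F1}, Corollary~\ref{interpret} produces $\wt T$, and the transfer argument above completes the proof.
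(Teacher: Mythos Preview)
Your two-step skeleton matches the paper exactly: find a many-sorted $\omega$-categorical non-$G$-compact theory, then push it through Corollary~\ref{interpret} and transfer the failure of $G$-compactness up to $\wt T$.

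The substantive difference is your Step~1. The paper does no construction here at all: it simply cites \cite{CasanovasLascarPillayZiegler01}, where a many-sorted $\omega$-categorical non-$G$-compact theory is already built. Your plan to manufacture such a theory from scratch via Lemma~\ref{F1} (and you describe it as ``a many-sorted version of Ivanov's original construction'') is therefore redundant work, and your sketch of it is too vague to count as a proof --- you would have to actually verify \AP, $(\ast)$, and the delicate amalgamation constraints that force the Lascar and KP types apart. The whole point of the present paper is that one does \emph{not} need to redo Ivanov: the CLPZ many-sorted example plus Corollary~\ref{interpret} gives the result essentially for free.

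For Step~2, your transfer is correct in spirit but phrased differently from the paper. You argue via stable embeddedness that Lascar and KP strong types on tuples from the $P_i$ coincide in $T_0$ and in $\wt T$. The paper instead argues the contrapositive directly using the $\mathrm{nc}^\omega$ description of Lascar equivalence: if $\wt T$ were $G$-compact, the transitive closure of $\mathrm{nc}^\omega$ would be type-definable, and this passes down to any $0$-definable set with its induced structure, forcing $T_0$ to be $G$-compact as well. Both routes work; the paper's is shorter because it avoids talking about KP types and just uses that indiscernibility over $\emptyset$ is visibly preserved under restriction to the induced structure.
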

\begin{proof}
  By \cite{CasanovasLascarPillayZiegler01} there is a many--sorted
  $\omega$--categorical theory $T$ which is not
  $G$--compact. Interpret $T$ in a one--sorted $\omega$--categorical
  theory $\wt T$ as in Corollary \ref{interpret}. Then $T$ is also not
  $G$--compact. For this one has to check that if $\wt T$ is
  $G$--compact, then every $0$--definable subset with its induced
  structure is also $G$--compact. This follows from the following
  description of $G$--compactness: $a$, $b$ of length $\omega$ are in
  the relation $\mathrm{nc^\omega}$ if $a$ and $b$ are the first two
  elements of an infinite sequence of indiscernibles. A complete
  theory is G--compact, if the transitive closure of
  $\mathrm{nc^\omega}$ is type--definable. (Note that $(a,b)$ is in the
  transitive closure of $\mathrm{nc^\omega}$ if and only if $a$ and $b$
  have the same Lascar-strong type.)
\end{proof}

\section{Classification of $T_E$}
\label{section:classification}

\begin{prop} $T_E$ has $\mathrm{TP}_2$, the tree
  property of the second kind, and therefore it is not simple.
\end{prop}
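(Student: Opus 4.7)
The plan is to witness $\mathrm{TP}_2$ directly using the pair-equivalence $E_2$. Consider the formula
\[
\varphi(x; y_1, y_2) \;\equiv\; E_2\!\bigl((y_1, x),\, (y_1, y_2)\bigr),
\]
with $x$ a single variable and $(y_1, y_2)$ a pair of parameters. In the monster model of $T_E$ I would choose pairwise distinct elements $e_i$ and $f_{i,j}$ ($i, j < \omega$), all of them distinct from one another, forming an array $a_{i,j} = (e_i, f_{i,j})$ such that, for each fixed $i$, the pairs $(e_i, f_{i,j})$ lie in pairwise distinct $E_2$-classes as $j$ varies. Such an array exists by the genericity of $T_E$: freely assign each non-diagonal pair $(e_i, f_{i,j})$ its own fresh $E_2$-class.

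Row $2$-inconsistency is then automatic: if $\varphi(x; a_{i,j})$ and $\varphi(x; a_{i,j'})$ both held, transitivity of $E_2$ would yield $(e_i, f_{i,j}) E_2 (e_i, f_{i,j'})$, contradicting the choice when $j \ne j'$.

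The heart of the argument is path consistency. Given $f : \omega \to \omega$, I need to produce $x$ satisfying $(e_i, x) E_2 (e_i, f_{i, f(i)})$ for all $i < \omega$. The crucial observation is that the pairs $(e_i, x)$ for different $i$ share no coordinates, and $T_E$ imposes no axiom relating the $E_2$-classes of such pairs; hence their $E_2$-classes can be prescribed independently. Concretely, for any finite $I \subset \omega$, the extension property for the Fra\"iss\'e class underlying $T_E$ yields a new generic element $x$ with $(e_i, x)$ placed in the same $E_2$-class as $(e_i, f_{i, f(i)})$ for every $i \in I$; compactness then gives the full consistency. The conclusion that $T_E$ is not simple follows, since $\mathrm{TP}_2$ implies the tree property.

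The main obstacle will be checking cleanly that these local prescriptions assemble into a legitimate member of $\Ka$, so that the extension property can be invoked. Given how simple the axiomatisation is, this reduces to confirming that the $E_n$-diagram on $\{e_i, f_{i, f(i)}, x : i \in I\}$ remains an equivalence relation with the prescribed repetition class; this is immediate because the pairs $(e_i, x)$ all involve the fresh second coordinate $x$ and so cannot force any unwanted collapse with pre-existing classes.
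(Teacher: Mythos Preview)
Your argument is correct and follows essentially the same strategy as the paper: witness $\mathrm{TP}_2$ via $E_2$, get row $2$-inconsistency from transitivity of $E_2$, and get path consistency from the free amalgamation underlying $T_E$ together with compactness. The paper's array is organised slightly differently---it uses $\varphi(x;y,u,v)=E_2(xy,uv)$ with parameters $\bar a^i_j=b_ic_jd_j$, so the target class depends only on the column index $j$---but this is cosmetic. One small slip: you write that ``the pairs $(e_i,x)$ for different $i$ share no coordinates'', which is false since they all share $x$; what actually matters (and what you go on to say correctly) is that $T_E$ imposes no relation between the $E_2$-classes of these distinct pairs, so they can be prescribed independently.
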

\begin{proof} We show that $\varphi(x;y,u,v)= E_2(xy,uv)$
  has $\mathrm{TP}_2$.  Let $(b_i:i < \omega)$, $(c_i:i < \omega)$,
  and $(d_i:i < \omega)$ be pairwise disjoint sequences of different
  elements such that $ \neg E_2(c_id_i,c_jd_j)$ for $i\neq j$. For
  $i,j \in \omega$, let $\bar a^i_j=b_ic_jd_j $. By compactness we can
  see that for any $\eta \in \omega^{\omega}$, the set
  $\{\varphi(x;\bar a^i_{\eta(i)}): i < \omega\}$ is consistent, and
  since the $c_id_i$'s are in different $E_2$-classes, for each $i <
  \omega$, the set $\{\varphi(x; \bar a^i_j): j < \omega\}$ is
  $2$-inconsistent.
\end{proof}

\begin{lemma}[Independence lemma] \label{N2}
  Let $a,b,c,d^\prime,d^{\prime\prime}$ be tuples in the monster model
  of $T_E$ and $F$ a finite subset. Assume that $a$ and $c $ have only
  elements from $F$ in common. If $d^\prime a\equiv_F d^\prime
  b\equiv_F d^{\prime\prime}b\equiv_F d^{\prime\prime}c$, then there
  exists some $d$ such that $d^\prime a\equiv_F da\equiv_F dc\equiv_F
  d^{\prime\prime} c$.
\end{lemma}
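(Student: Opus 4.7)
The plan is to pass to the interdefinable theory $T^\ast$, which has quantifier elimination in $L^\ast$ and is the model-completion of $\To$, and to construct $d$ via Fra\"{\i}ss\'e amalgamation followed by an appeal to the extension axiom of $T^\ast$. By QE, finding $d$ in the monster of $T_E$ with $\tp(d/Fa)=\tp(d'/Fa)$ and $\tp(d/Fc)=\tp(d''/Fc)$ amounts to producing a finitely generated $L^\ast$-extension $B\in\Ka$ of $A:=\langle F\cup a\cup c\rangle_{M^\ast}$ by a new element $d$ of sort $S$ such that $\langle d,F,a\rangle_B\cong\langle d',F,a\rangle_{M^\ast}$ (via $d\leftrightarrow d'$ fixing $F\cup a$) and $\langle d,F,c\rangle_B\cong\langle d'',F,c\rangle_{M^\ast}$ (via $d\leftrightarrow d''$ fixing $F\cup c$). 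The extension axiom of $T^\ast$ will then embed $B$ into $M^\ast$ over $A$, and the image of $d$ is the desired element.

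To build $B$, I would first extract from the middle equivalence $d'b\equiv_F d''b$ the restricted statement $\tp(d'/F)=\tp(d''/F)$, which yields an $L^\ast$-isomorphism $\sigma\colon\langle d',F\rangle\to\langle d'',F\rangle$ fixing $F$ and sending $d'\mapsto d''$. Then $B$ is formed as the pushout of $\langle d',F,a\rangle$ and $\langle d'',F,c\rangle$ over $\sigma$ (identifying $d:=d'=d''$), amalgamated with $A$. The hypothesis $a\cap c\subseteq F$ ensures that the sort $S$ of $B$ is the disjoint union $\{d\}\cup F\cup a\cup c$. Values $f_i^B$ on tuples involving $d$ are copied from $M^\ast$ via the prescribed isomorphisms on sub-tuples of $\langle d,F,a\rangle$ and $\langle d,F,c\rangle$, with fresh elements of $S_i$ used for the remaining \emph{mixed} tuples (those involving $d$ together with elements of both $a\setminus F$ and $c\setminus F$).

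The main obstacle is verifying the consistency of this construction: that $B$ is a well-defined finitely generated model of $\To$ truly extending $A$. Well-definedness on the overlap $\langle d,F\rangle$ is secured by $\sigma$. The delicate step is that transitivity of the $E_i$-equivalence relations in the interpretation of $B$ in $T_E$ could a priori force $E_i$-identifications between tuples in $\langle F,a\rangle$ and $\langle F,c\rangle$ via chains through the $\langle d,F\rangle$-part, and such forced identifications must be checked against the actual $L^\ast$-structure of $A$ in $M^\ast$. This is exactly where the \emph{full} chain hypothesis $d'a\equiv_F d'b\equiv_F d''b\equiv_F d''c$ enters, rather than only the restriction $\tp(d'/F)=\tp(d''/F)$: the outer equivalences provide $F$-automorphisms of the monster ($\sigma_1$ fixing $d'$ with $\sigma_1(a)=b$, and $\sigma_3$ fixing $d''$ with $\sigma_3(b)=c$) that translate any coincidence forced by such a chain back to an $E_i$-identification involving $F\cup b$-tuples, where the chain hypothesis guarantees it is already compatible. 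Once $B\in\Ka$ is established, the extension axiom of $T^\ast$ embeds $B$ over $A$ into $M^\ast$, completing the construction.
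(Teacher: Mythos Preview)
Your approach is correct and is essentially the paper's own proof, recast in the interdefinable theory $T^\ast$: the paper works directly in $T_E$, builds the new structure on $A\cup C\cup D$ by taking $E_n$ to be the transitive closure of the pieces imported from $AD'$, $CD''$, and $AC$, and then checks three compatibility conditions (exactly your ``delicate step'') using the chain through $b$ just as you outline. One slip to fix: $d$ must be a \emph{tuple} of the same length as $d'$ and $d''$, not a single element of sort $S$, so the sort-$S$ part of your $B$ is $D\cup F\cup a\cup c$ with $D$ the underlying set of $d$ (and one must first observe, as the paper does, that $D'\cap A\subseteq F$ and $D''\cap C\subseteq F$, so that $D$ can be chosen to meet $A$ and $C$ only in $F$).
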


\begin{center}
\setlength{\unitlength}{1.8cm}
\begin{picture}(2,1.5)(-1,-0.25)
\put(-1,0){$a$}
\put(-.95,0.2){\vector(0,1){.7}}
\put(-1,1){$d^\prime$}
\put(-.1,1.05){\vector(-1,0){.7}}
\put(0,1){$b$}
\put(0.2,1.05){\vector(1,0){.7}}
\put(1,1){$d^{\prime\prime}$}
\put(1.05,.2){\vector(0,1){.7}}
\put(1,0){$c$}
\put(.9,0.05){\vector(-1,0){.7}}
\put(0,0){$d$}
\put(-.8,.05){\vector(1,0){.7}}
\end{picture}
\end{center}

\begin{proof} Let $A$, $B$, $C$, $D'$ and $D''$ denote the
  set of elements of the tuples $a$, $b$, $c$, $d'$ and $d''$,
  respectively. We note first that we can assume that $F$ is contained
  in $A$,$B$ and $C$, since otherwise we can increase $a$, $b$ and $c$
  by elements from $F$.  Then we note that if $A$ and $D'$ intersect
  in a subtuple $f$, this tuple also belongs to $B$ and $C$ and
  therefore to $F$. So we have that $A\cap D'$ is contained in $F$ and
  similarly that $C\cap D''$ is contained in $F$.

  It suffices to find an $L_E$-structure $M$ extending $AC$ and
  containing a new tuple $d$ with the same quantifier-free type as
  $d^\prime$ over $A$ and of $d^{\prime\prime}$ over $C$.  Take as $d$
  a new tuple of the right length which intersects $A$ and $B$ in the
  subtuple $f$. We have then $d^\prime
  a\equiv_F^\mathrm{eq}da\equiv_F^\mathrm{eq}dc\equiv_F^\mathrm{eq}
  d^{\prime\prime} c$, where $g\equiv_F^\mathrm{eq}h$ means that $g$
  and $h$ satisfy the same equality-formulas over $F$, i.e.\ $g_i=g_j$
  iff $h_i=h_j$ and $g_i=f_j$ iff $h_i=f_j$. If $D$ denotes the
  elements of $d$, it follows that the intersection of any two of $A$,
  $C$ and $D$ belongs to $F$.

  It remains to define the relations $E_n$ on $ACD$. Let $E_n^0$
  denote the part of $E_n$ which is already defined on $AC$. Let
  $E_n'$ be the relation $E_n$ transported from $AD'$ to $AD$ via the
  identification $d'\mapsto d$ and $E_n''$ the relation $E_n$
  transported from $CD''$ to $CD$ via the identification $d''\mapsto
  d$.  Note that $d'\equiv_F d''$ implies that $E_n'$ and $E_n''$
  agree on $DF$.  We define $E_n$ on $ACD$ as the transitive closure
  of
  \[E_n^0\cup E_n'\cup E_n''\cup E_n^\mathrm{rep}\cup\Delta,\]
  where $E_n^\mathrm{rep}$ is the set of all pairs of $n$--tuples from
  $ACD$ which contain repetitions and $\Delta$ is the identity on
  $(ACD)^n$.

  We have to show that the new structure defined on $AC$ agrees with
  the original structure.  Also we must check that the structure on
  $AD$ (and $CD$) agrees with the structure on $AD'$ (and $CD'$) via
  $d\mapsto d'$ (and $d\mapsto d''$).  Using the fact that an
  $n$-tuple which e.g.\ belongs to $AC$ and $AD$ belongs already to
  $A$, it is easy to see that we have to show the following:
  \begin{itemize}
  \item[] For all $n$--tuples $x\in A$, $y\in C$ and $z\in DF$
    \begin{enumerate}
    \item\label{xzy} $E_n'(x,z)\land
      E_n''(z,y)\;\Rightarrow\;E_n^0(y,x)$
    \item\label{zyx} $E_n''(z,y)\land
      E_n^0(y,x)\;\Rightarrow\;E_n'(x,z)$
    \item\label{yxz} $E_n^0(y,x)\land
      E_n'(x,z)\;\Rightarrow\;E_n''(z,y)$
    \end{enumerate}
  \end{itemize}
  Let $z'$ and $z''$ be the subtuples of $D'F$ and $D''F$ which
  correspond to $z$.\\

  \noindent Proof of \ref{xzy} : Assume $E_n'(x,z)$ and
  $E_n''(z,y)$. We have then $E_n(x,z')$ and $E_n(z'',y)$. $d^\prime
  a\equiv_F d^\prime b$ implies $z'a\equiv_F z'b$, which implies that
  there is a tuple $x'$ in $B$ such that $z'x\equiv_F z'x'$. So we
  have $E_n(z',x')$. $d^\prime b\equiv_F d^{\prime\prime}b$ implies
  $z'x'\equiv_F z''x'$ and whence $E_n(z'',x')$. Now we can connect
  $y$ and $x$ as follows: $y\;E_n\;z''\;E_n\;x'\;E_n\;z'\;E_n\;x$.\\

  \noindent Proof of \ref{zyx} : Assume $E_n''(z,y)$ and
  $E_n^0(y,x)$. We have then $E_n(z'',y)$. As above we find
  a tuple $y'\in B$ such that $E_n(z'',y')$ and $E_n(z',y')$.
  The chain $x\;E_n\;y\;E_n\;z''\;E_n\;y'\;E_n\;z'$ shows that
  $E_n'(x,z)$.\\

  \noindent Proof of \ref{yxz} : Symmetrical to the proof of
  \ref{zyx}.
\end{proof}

In order to state \cite[Proposition 2.3]{KimKim11} we
need the following terminology:
\begin{enumerate}[(1)]
\item A tuple $\bar\eta=(\eta_0,\ldots,\eta_{d-1})$ of
  elements of $\tree$ is \emph{$\cap$-closed} if the set
  $\{\eta_0,\ldots,\eta_{d-1}\}$ is closed unter intersection.
\item Two $\cap$-closed tuples $\bar\eta$ and $\bar\nu$ are
  \emph{isomorphic} if they have the same length and
  \begin{enumerate}[(i)]
  \item $\eta_i\unlhd\eta_j$ iff $\nu_i\unlhd\nu_j$
  \item $\eta_i^\smallfrown t\unlhd\eta_j$ iff $\nu_i^\smallfrown
    t\unlhd\nu_j$ for $t=0,1$.
  \end{enumerate}
\item A tree $(a_\eta\colon\eta\in\tree)$ of tuples of the same length
  is \emph{modeled} by $(b_\eta\colon\eta\in\tree)$ if for every
  formula $\phi(\bar x)$ and every $\cap$-closed $\bar\eta$ there is a
  $\cap$--closed $\bar\nu$ isomorphic to $\bar\eta$ such that
  $\models\phi(b_{\bar\eta})\,\Leftrightarrow\,\models\phi(a_{\bar\nu})$.
\item $(b_\eta\colon\eta\in\tree)$ is \emph{indiscernible} if
  $\models\phi(b_{\bar\eta})\,\Leftrightarrow\,\models\phi(b_{\bar\nu})$
  for all isomorphic $\cap$-closed $\bar\eta,\bar\nu$.
\end{enumerate}

\begin{lemma}[\protect{\cite[Proposition 2.3]{KimKim11}. See also~\cite{DzamonjaShelah04}}]
  \label{N1}
  Let $T$ be a complete theory. Then any tree of tuples can be modeled
  by an indiscernible tree.
\end{lemma}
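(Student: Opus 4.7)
The plan is to derive the lemma by a compactness reduction to a finite tree-Ramsey statement, which I would then prove by an iterated application of the Erd\H{o}s--Rado theorem.

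I would begin by framing the goal as the consistency of a partial type. Introduce fresh variables $\bar y_\eta$ for each $\eta\in\tree$ and let $\Sigma$ be the partial type asserting (i) $\phi(\bar y_{\bar\eta})\leftrightarrow\phi(\bar y_{\bar\nu})$ for every formula $\phi$ and every pair of isomorphic $\cap$--closed $\bar\eta,\bar\nu$, together with (ii) for every formula $\phi$ and every $\cap$--closed $\bar\eta$ the disjunction $\bigvee_{\bar\nu}\bigl(\phi(\bar y_{\bar\eta})\leftrightarrow\phi(a_{\bar\nu})\bigr)$, where $\bar\nu$ ranges over $\cap$--closed tuples in $\tree$ isomorphic to $\bar\eta$. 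A realisation of $\Sigma$ is precisely an indiscernible tree modelled by $(a_\eta)$. By compactness it suffices to realise every finite fragment of $\Sigma$, and such a fragment mentions only finitely many formulas $\Delta$ and only nodes inside some $2^{<n}$.

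For the finite version I would work inside the monster and first produce a fat intermediate tree. Using the infinite depth of $\tree$, a routine compactness argument yields for any cardinal $\kappa$ a tree of tuples $(a'_\sigma\colon\sigma\in\kappa^{<n})$ every $\cap$--closed subtuple of which has the quantifier-free tree type (in the $\kappa$--branching sense) of some $\cap$--closed subtuple of $(a_\eta)$; what makes this consistent is that $(a_\eta)$ contains $\cap$--closed tuples of every required meet pattern in arbitrarily deep binary subtrees, into which $\kappa$--branching specifications can be absorbed level by level. Choosing $\kappa=\beth_N^+$ for $N$ large depending on $n$ and $|\Delta|$, I would apply Erd\H{o}s--Rado to the $n$ levels of $(a'_\sigma)$ in turn: at level $k$, for each already retained ancestor $\sigma$, colour the $\kappa$ immediate successors of $\sigma$ by the $\Delta$--diagram of the associated tuple relative to all previously retained nodes, and pass to a homogeneous subset large enough to continue the recursion. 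After $n$ rounds the residue contains a binary subtree of shape $2^{<n}$ on which the $\Delta$--types of $\cap$--closed tuples depend only on isomorphism type; labelling its edges with $0$ and $1$ in a uniform way produces the desired $(b_\eta\colon\eta\in 2^{<n})$.

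The delicate part is this last step: the colouring at level $k$ must record the behaviour of each candidate successor against \emph{all} previously retained nodes --- ancestors, cousins, and siblings alike --- rather than just its immediate predecessor, so that indiscernibility is obtained for $\cap$--closed tuples that straddle several levels. This forces $N$ to grow with both $n$ and $|\Delta|$ and is the reason an iterated $\beth$--successor is needed rather than merely a Ramsey cardinal. A secondary obstacle is to assign the $\{0,1\}$--labels uniformly in the final binary extraction so that the resulting tree is indiscernible in the \emph{labelled} sense of the definition and not merely in the meet-structure sense inherited from the $\kappa$--branching tree; this is handled by choosing, at each retained node, a canonical pair of surviving children to serve as its $0$-- and $1$--successors.
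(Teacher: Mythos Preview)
The paper does not supply its own proof of this lemma: it is quoted from \cite[Proposition~2.3]{KimKim11}, and the introduction explicitly remarks that the earlier argument in \cite{DzamonjaShelah04} was known to have gaps. That history already signals that the result is more delicate than a routine partition argument.

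Your compactness reduction to a finitary tree-Ramsey statement is the right overall shape (with the caveat that clause~(ii) of your $\Sigma$ is an infinite disjunction and hence not a formula; the modelling condition has to be secured at the finite stage as a metatheoretic byproduct of the extraction rather than written into the type). The substantive gap is the Erd\H{o}s--Rado step. When at level $k$ you colour the successors of a single retained node $\sigma$ by their $\Delta$-type against all previously retained nodes and homogenise, you control $\cap$-closed tuples whose level-$k$ entries all lie above that one $\sigma$. You do \emph{not} control a $\cap$-closed tuple that picks level-$k$ elements $s,t$ above two distinct level-$(k{-}1)$ nodes $\sigma\neq\tau$: knowing $\tp_\Delta(s/\text{retained})$ and $\tp_\Delta(t/\text{retained})$ separately tells you nothing about $\tp_\Delta(s,t/\text{retained})$. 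Homogenising such cross-branch tuples requires a partition theorem for products of subtrees---this is exactly the Halpern--L\"auchli theorem (or, in the form actually needed, Milliken's strong-subtree theorem)---and ordinary Erd\H{o}s--Rado on a single linear index set does not provide it. The correct proofs (Kim--Kim, or the structural-Ramsey route via the fact that finite labelled meet-trees form a Ramsey class) rest on a genuine tree Ramsey theorem of this kind. Your stretch to a $\kappa$-branching intermediate tree has a related defect: $\kappa^{<n}$ contains $\cap$-closed configurations, for instance three siblings pairwise meeting at their common parent, that have no isomorphic copy in $2^{<\omega}$, so ``absorbing $\kappa$-branching specifications into deep binary subtrees'' cannot work as stated.
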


\begin{defi}\rm The formula $\varphi(x,y)$ has
  $\mathrm{SOP}_2$ in $T$ if there is a binary tree $(a_\eta: \eta\in
  2^{<\omega})$ such that for every $\eta\in 2^\omega$,
  $\{\varphi(x,a_{\eta\restriction n}): n<\omega\}$ is consistent and
  for every incomparable $\eta, \nu\in 2^{<\omega}$,
  $\varphi(x,a_\eta) \wedge \varphi(x,a_\nu)$ is inconsistent.  The
  theory $T$ has $\mathrm{SOP}_2$ if some formula $\varphi(x,y)\in L$
  has $\mathrm{SOP}_2$ in $T$.
\end{defi}

\begin{remark}[H. Adler] The formula $\varphi(x,y)$ has
$\mathrm{SOP}_2$ in $T$ if and only if $\varphi(x,y)$ has the tree
  property of the first kind $\mathrm{TP}_1$: there is a tree
  $(a_\eta: \eta\in \omega^{<\omega})$ such that for every $\eta\in
  \omega^\omega$, $\{\varphi(x,a_{\eta\restriction n}): n<\omega\}$ is
  consistent and for every incomparable $\eta, \nu\in
  \omega^{<\omega}$, $\varphi(x,a_\eta) \wedge \varphi(x,a_\nu)$ is
  inconsistent.
\end{remark}
\begin{proof} By compactness.
\end{proof}

\begin{prop} $T_E$ does not have $\mathrm{SOP}_2$.
\end{prop}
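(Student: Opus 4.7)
The plan is to assume for contradiction that some formula $\varphi(x, y)$ has $\mathrm{SOP}_2$ in $T_E$, and derive a contradiction by combining Lemma~\ref{N1} with the independence lemma (Lemma~\ref{N2}). By the Adler remark, we may equivalently work with $\mathrm{TP}_1$, witnessed by a tree $(a_\eta)_{\eta \in \omega^{<\omega}}$ whose branches are $\varphi$-consistent and whose incomparable pairs give $\varphi$-inconsistent pairs. Apply Lemma~\ref{N1} to replace the tree by an indiscernible tree modeling it; $\mathrm{TP}_1$ transfers, since the two defining properties are witnessed by first-order formulas evaluated on $\cap$-closed tuples of one or two nodes. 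After relabelling, assume $(a_\eta)$ itself is indiscernible.

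The heart of the proof is to set up a configuration fitting Lemma~\ref{N2}. Choose two incomparable nodes $\eta_0, \eta_1$ of the tree with longest common initial segment $\mu$, and set $a = a_{\eta_0}$, $c = a_{\eta_1}$, $b = a_\mu$. Take a finite set $F$ of ancestor-elements large enough that $a \cap c \subseteq F$; by indiscernibility one can arrange that any element forced into $a \cap c$ already lies in the common-ancestor data, after a preliminary step making the tuples $a_\eta$ disjoint outside of such forced intersections. Then, using path-consistency along the branches through $\eta_0$ and $\eta_1$, together with the symmetries of the indiscernible tree and a compactness/genericity argument, choose realizations $d'$ and $d''$ with $\varphi(d', a) \wedge \varphi(d', b)$ and $\varphi(d'', c) \wedge \varphi(d'', b)$, so that the full chain $d'a \equiv_F d'b \equiv_F d''b \equiv_F d''c$ holds.

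With this in hand, Lemma~\ref{N2} produces $d$ with $d'a \equiv_F da \equiv_F dc \equiv_F d''c$; in particular $\varphi(d, a) \wedge \varphi(d, c)$ holds, contradicting the inconsistency of this conjunction at the incomparable pair $(a, c)$. The main technical obstacle is the simultaneous realization of all four terms of the type-equality chain. The outer equalities $d'a \equiv_F d'b$ and $d''c \equiv_F d''b$ rely on $a, b, c$ being $F$-conjugate single-node tuples in the indiscernible tree, coupled with a generic (invariant-type-like) choice of $d', d''$ ensuring they do not break the $F$-symmetry between $a, b$ and $c, b$ respectively; the middle equality $d'b \equiv_F d''b$ is the most delicate, and is where the indiscernibility given by Lemma~\ref{N1} is essential, providing the branch-swap symmetry above $\mu$ that one uses to take $d''$ as the image of $d'$ under an automorphism fixing $F \cup b$ and sending the branch through $\eta_0$ to the branch through $\eta_1$.
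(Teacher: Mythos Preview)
Your overall strategy matches the paper's---combine tree-indiscernibility with the Independence Lemma---but there is a genuine gap in how you obtain the chain $d'a \equiv_F d'b \equiv_F d''b \equiv_F d''c$. You make only $(a_\eta)$ indiscernible and then try to choose $d',d''$ afterwards by a ``generic (invariant-type-like)'' argument. This does not work: indiscernibility of $(a_\eta)$ gives $a\equiv_F b$, but a realization $d'$ of $\varphi(x,a)$ has no reason to satisfy $d'a\equiv_F d'b$, since $\varphi$ may force $d'$ to relate asymmetrically to $a$ versus $b$, and there is no ambient invariant-type machinery in $T_E$ to appeal to. The paper's key idea, which you are missing, is to attach to each node $\eta$ a witness $d_\eta$ with $\models\varphi(d_\eta,a_\nu)$ for all $\nu\subsetneq\eta$ \emph{before} invoking Lemma~\ref{N1}, and to make the combined tree $(d_\eta a_\eta)_{\eta\in 2^{<\omega}}$ indiscernible. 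With $a=a_{00}$, $b=a_{\langle\rangle}$, $c=a_{01}$, $d'=d_{000}$, $d''=d_{010}$, each of the four equalities is then a direct instance of tree-indiscernibility (e.g.\ the $\cap$-closed pairs $(000,00)$ and $(000,\langle\rangle)$ are isomorphic), and no separate genericity step is needed.

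Two smaller points. The detour through $\mathrm{TP}_1$ and $\omega^{<\omega}$ is unnecessary and slightly awkward, since Lemma~\ref{N1} is stated for $2^{<\omega}$; one can work directly with the $\mathrm{SOP}_2$ witness. And your handling of $F$ is too loose: the paper argues explicitly that if $a_{00}$ and $a_{01}$ share an element, indiscernibility forces it to occur at the same coordinate in each of $a_{00}$, $a_{\langle\rangle}$, $a_{01}$, so taking $F$ to be the set of common elements the chain automatically holds over $F$.
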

\begin{proof} We follow ideas from a similar proof
  in~\cite{ShelaUsvyatsov03}. Assume $\varphi(x,y)$ has
  $\mathrm{SOP}_2$ in $T_E$ and the tree $(a_\eta: \eta\in
  2^{<\omega})$ witnesses it. Choose for every $\eta$ a tuple $d_\eta$
  such that $\models\phi(d_\eta,a_\nu)$ for all $\nu\subsetneq\eta$.

  By Lemma~\ref{N1} we can assume that the tree $(d_\eta a_\eta :
  \eta\in 2^{<\omega})$ is indiscernible. Let us now look at the
  elements $a_{00},a_{\langle\rangle},a_{01},d_{000}, d_{010}$.  We
  have by indiscernibility
  \[d_{000}a_{00}\equiv d_{000}a_{\langle\rangle}\equiv
  d_{010}a_{\langle\rangle}\equiv d_{010}a_{01}.\]

  If the tuples $a_{00}$ and $a_{01}$ are disjoint, we can apply the
  Independence Lemma to $a=a_{00}$, $b=a_{\langle\rangle}$,
  $c=a_{01}$, $d'=d_{000}$, $d''=d_{010}$ to get a tuple $d$ such that
  \[d_{000}a_{00}\equiv da_{00} \equiv
  da_{01}\equiv d_{010}a_{01}.\] It follows that
  $\models\varphi(d,a_{00})\wedge\varphi(d,a_{01})$, which contradicts
  the $\mathrm{SOP}_2$ of the tree.

  If $a_{00}$ and $a_{01}$ are not disjoint, we argue as follows:
  Assume that $a_{00}$ and $a_{01}$ have an element $f$ in common, say
  $f=a_{00,i}=a_{01,j}$. Then $a_{00}a_{01}\equiv a_{000}a_{01}$
  implies $a_{000,i}=a_{01,j}$. So we have $a_{000,i}=a_{00,i}$ and it
  follows from indiscernibility that
  $f=a_{00,i}=a_{\langle\rangle,i}=a_{01,i}$. Let $F$ be the set of
  elements which occur in both $a_{00}$ and $a_{01}$. We have seen
  that the elements of $F$ occur in $a_{00}$, $a_{\langle\rangle}$ and
  $a_{01}$ at the same places. Therefore \[d_{000}a_{00}\equiv_F
  d_{000}a_{\langle\rangle}\equiv_F d_{010}a_{\langle\rangle}\equiv_F
  d_{010}a_{01}\] and we can again apply the Independence Lemma.

\end{proof}

\nocite{Ivanov06}\nocite{Chat-Hru99}\nocite{Ziegler06}
\nocite{CasanovasLascarPillayZiegler01}

\bibliography{CPZ}
\bibliographystyle{plain}

\noindent{\sc
Department of Logic, History and Philosopy of Science,
University of Barcelona,
Montalegre 5, 08001 Barcelona, Spain.}\\
{\tt e.casanovas@ub.edu}\\

\noindent{\sc
Department of Logic, History and Philosopy of Science,
University of Barcelona,
Montalegre 5, 08001 Barcelona, Spain.}\\
{\tt rpelaezpelaez@yahoo.com}\\

\noindent{\sc Mathematisches Institut, Albert-Ludwigs-Universit\"at
  Freiburg, D-79104 Freiburg, Germany}\\ {\tt ziegler@uni-freiburg.de}

\end{document}